\newcommand{\vertiii}[1]{{\left\vert\kern-0.25ex\left\vert\kern-0.25ex\left\vert #1
    \right\vert\kern-0.25ex\right\vert\kern-0.25ex\right\vert}}
\theoremstyle{plain}
\begin{document}%%
\title[Lattice points in shears ]{Lattice points in a circle for generic unimodular shears}
\author{Dubi Kelmer}
\thanks{The author is partially supported by NSF grant DMS-1401747.}
\email{kelmer@bc.edu}
\address{Boston College, Boston, MA}

\subjclass{}%
\keywords{}%

\date{\today}%
\dedicatory{}%
\commby{}%

\begin{abstract}
Given a unimodular lattice $\Lambda\subseteq \R^2$ consider the counting function  $\cN_\Lambda(T)$ counting the number of lattice points of norm less than $T$, and the remainder $\cR_\Lambda(T)=\cN(T)-\pi T^2$. We give an elementary proof that the mean square of  the remainder over the set of all shears of a unimodular lattice is bounded by $O(T\log^2(T))$.
\end{abstract}

 \maketitle
\section{Introduction}
For $\Lambda\subset \R^2$ a unimodular lattice let  $\cN_\Lambda(T)$
denote the counting function counting the number of lattice points in a circle of radius $T$. This function grows asymptotically like the area $\pi T^2$ and we write
$$\cN_\Lambda(T)=\pi T^2+ \cR_\Lambda(T),$$
with $\cR_\Lambda(T)$ the remainder function. 
A simple geometric argument, going back to Gauss, implies that $|\cR_\Lambda(T)|\ll T$. Here and below we will use the notation $X\ll Y$ to mean that there is some constant $C$ so that $X\leq C Y$ (when the implied constant depends on parameters we will denote it in a subscript). 

Over the last century there have been many improvements of this bound  \cite{Sierpinski1906,LittlewoodWalfisz1925,Kolesnik1985,IwaniecMozzochi1988,Huxley1993} with the present record $|\cR_\Lambda(T)|\ll_\epsilon T^{131/208+\epsilon}$ due to Huxley \cite{Huxley2003}. This bound is still some distance away from the conjectured bound of $|\cR_\Lambda(T)|\ll_\epsilon T^{1/2+\epsilon}$ conjectured by Hardy \cite{Hardy1915}, who also showed that the exponent $1/2$ is best possible.

In addition to numerical evidence for Hardy's conjecture there are also probabilistic arguments showing that such a bound holds on average. For example, when averaging over the radius we have bounds of the form   
\begin{equation}\label{e:radius}
\frac{1}{H}\int_{T}^{T+H}|\cR_{\Lambda}(t)|^2dt\ll T,
\end{equation}
for various ranges of $H$   \cite{Nowak1985,Bleher1992,Huxley1995,Nowak2002}. 
Another type of average is over deformations in the full space of lattices (not just unimodular). For example, Hofmann, Iosevich, and Weidinger \cite{HofmannIosevichWeidinger2004}, showed that
\begin{equation}\label{e:deform}
\int_{1/2}^2\int_{1/2}^2|E_a(T)|^2da\ll T
\end{equation}
with
$$E_a(T)=\#\{n,m \in \Z | \frac{n^2}{a_1}+\frac{m^2}{a_2}<T^2\}-\sqrt{a_1a_2}\pi T^2.$$ 
By considering this problem spectrally, as estimating the remainder term in Weyl's law for the Laplace spectrum on a flat torus, Petridis and Toth \cite{PetridisToth2002} showed similar bounds for the mean square for more general families of deformations. More generally, Holmin \cite{Holmin13} gave similar bounds for averages over general compact sets of deformations in the full space of lattices. The sets of deformations in all of these examples include lattices and their dilations and hence, as was pointed out in \cite{HofmannIosevichWeidinger2004},  such bounds essentially follow from mean square bounds in the radius of the form of \eqref{e:radius}.

The purpose of this note is to give a mean square estimates when averaging over compact subsets of the space of unimodular lattices. To be more precise, for $z=x+iy$ let $$\Lambda_{z}=\left\{(m\sqrt{y},\tfrac{mx+n}{\sqrt{y}}): m,n\in \Z\right\},$$
and consider the one dimensional family of deformations given by shears
 $$\{\Lambda_{z+t}|t\in [0,1]\}.$$
We note that $\Lambda_{z}=\Lambda_{z+1}$, so this is indeed the family of all shears of $\Lambda_z$. %(in fact $\Lambda_z=\Lambda_{\gamma.z}$ for any  $\gamma\in \SL_2(\Z)$ acting on $z$ by linear fractional transformations, but we shall not use this here). 
We will give an elementary and simple proof of the following:

\begin{thm}\label{t:main}
For this family we have the uniform mean square estimate
\begin{equation}\label{e:meansquare}
\int_0^1|\cR_{\Lambda_{x+iy}}(T)|^2dx\ll \tfrac{T}{\sqrt{y}}\log^2(\tfrac{T}{\sqrt{y}})+y^{3/2}T.
\end{equation}
Moreover, this estimate is optimal (up to the logarithmic term) in the sense that for $T\in \sqrt{y} \N$ we also have
\begin{equation}\label{e:meansquare}
\int_0^1|\cR_{\Lambda_{x+iy}}(T)|^2dx\gg y^{3/2}T.
\end{equation}
\end{thm}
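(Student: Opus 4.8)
The plan is to reduce the count to a one–dimensional problem in each ``column'' $m$ and to isolate the only $x$–dependent piece. For fixed $m$ with $m^2y<T^2$, the point $(m\sqrt y,(mx+n)/\sqrt y)$ lies in the disc of radius $T$ exactly when $(mx+n)^2<y(T^2-m^2y)$, i.e.\ when $n$ lies in an interval of length $2L_m$ centred at $-mx$, where $L_m=L_m(T)=\sqrt{y(T^2-m^2y)}$. Writing $\psi(u)=u-\lfloor u\rfloor-\tfrac12$ for the sawtooth function and counting integers in this interval, each column $m\ge 1$ (paired with $-m$) contributes $4L_m$ plus the boundary terms $-2\bigl(\psi(L_m-mx)+\psi(L_m+mx)\bigr)$. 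Summing over $|m|<T/\sqrt y$ and using $\int_{-T/\sqrt y}^{T/\sqrt y}2L_u\,du=\pi T^2$, I would write
\[
\cR_{\Lambda_{x+iy}}(T)=A(T)+B(x,T),
\]
where $A(T)=\sum_{|m|<T/\sqrt y}2L_m-\pi T^2$ is the ($x$–independent) error in approximating the area by the Riemann sum of column lengths (the single $m=0$ boundary term is bounded, $x$–independent, and harmlessly absorbed into $A$), and $B(x,T)=-2\sum_{1\le m<T/\sqrt y}\bigl(\psi(L_m-mx)+\psi(L_m+mx)\bigr)$ collects the boundary terms. For $m\ge1$ each $\psi(L_m\pm mx)$ has mean zero in $x$, so $\int_0^1 B\,dx=0$, the cross term drops, and
\[
\int_0^1|\cR_{\Lambda_{x+iy}}(T)|^2\,dx=A(T)^2+\int_0^1 B(x,T)^2\,dx .
\]
The two terms of the theorem come from these two pieces.

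For the $B$–term I would insert $\psi(u)=-\sum_{k\ge1}\frac{\sin(2\pi k u)}{\pi k}$, which (for a.e.\ $x$) turns $\psi(L_m-mx)+\psi(L_m+mx)$ into $-\frac2\pi\sum_k\frac{\sin(2\pi kL_m)}{k}\cos(2\pi kmx)$, so that $B$ is a cosine series with frequencies $km$. By Parseval only resonant frequencies $km=k'm'$ survive, giving
\[
\int_0^1 B^2\,dx=\tfrac{8}{\pi^2}\!\!\sum_{\substack{m,m'<M,\ k,k'\ge1\\ km=k'm'}}\!\!\frac{\sin(2\pi kL_m)\sin(2\pi k'L_{m'})}{kk'},\qquad M:=\tfrac T{\sqrt y}.
\]
The diagonal $m=m'$ (hence $k=k'$) contributes at most $\tfrac8{\pi^2}\sum_{m<M}\sum_k k^{-2}\ll M$. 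For the off–diagonal part I would bound the sines trivially and reorganise by the common value $r=km=k'm'$; since $m\mid r$ and $m'\mid r$, summing the resulting $r^{-2}$ over multiples of $\operatorname{lcm}(m,m')$ collapses the bound to $\tfrac{\pi^2}{6}\sum_{m,m'<M}\gcd(m,m')^2/(mm')$, which is $\ll M$ after the substitution $m=ga,\ m'=gb$. Hence $\int_0^1B^2\,dx\ll M=T/\sqrt y$, which yields the first term $\frac T{\sqrt y}\log^2(\frac T{\sqrt y})$ (in fact even without the logarithm).

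For the $A$–term, writing $T=M\sqrt y$ gives $L_m=y\sqrt{M^2-m^2}$ and $A(T)=y\,E(M)$ with $E(M)=2\sum_{|m|<M}\sqrt{M^2-m^2}-\pi M^2$, the error of the integer Riemann sum for $f(u)=\sqrt{M^2-u^2}$ against $\int_{-M}^M f=\tfrac\pi2M^2$. Away from the endpoints the midpoint comparison controls this by $\int|f''|$, which is $\ll\sqrt M$; the two boundary layers $|u|\in[M-1,M]$, where $f$ has a square–root singularity, contribute $\ll\sqrt M$ directly. Thus $|E(M)|\ll\sqrt M$ and $A(T)^2=y^2E(M)^2\ll y^2 M=y^{3/2}T$, completing the upper bound. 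For the lower bound I take $M=N\in\N$; then $f(\pm N)=0$, and the boundary defect is $\sqrt{2N}$ times the Euler–Maclaurin constant of $\sum_{j}\sqrt j$, namely $\zeta(-\tfrac12)\neq0$. This forces $|E(N)|\gg\sqrt N$, whence $\int_0^1|\cR|^2\,dx\ge A(T)^2\gg y^2N=y^{3/2}T$.

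The main obstacle is the sharp two–sided analysis of the endpoint Riemann–sum error $E(M)$: the upper bound $|E(M)|\ll\sqrt M$ must survive the blow–up of $f',f''$ at $u=\pm M$, and—more delicately—the lower bound $|E(N)|\gg\sqrt N$ requires showing the endpoint correction does not vanish, which is exactly why the hypothesis $T\in\sqrt y\,\N$ (making $\pm N$ integer nodes with $f=0$) is imposed; for non–integer $M$ the boundary defect oscillates with $\{M\}$ and may cancel. Everything else is routine: the column decomposition, the mean–zero property killing the cross term, and the elementary $\gcd$–sum estimate bounding the off–diagonal.
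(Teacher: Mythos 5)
Your argument has the same skeleton as the paper's: your column decomposition is precisely the paper's Lemma \ref{l:formula} (your $A(T)$ is $yP(T/\sqrt{y})-\pi T^2$ plus the $x$-independent $m=0$ defect, and your $B(x,T)$ is exactly the paper's $H_T(z)$, since $\psi=-s$), your bound $|E(M)|\ll\sqrt M$ is the paper's Proposition \ref{p:PT} in Euler--Maclaurin form, and the mean-zero property of $B$ is used by the paper too. Where you genuinely diverge is the mean-square estimate for the oscillatory part, and your route is better. The paper (Proposition \ref{p:meansquare}) splits the sawtooth Fourier series at a parameter $A$, applies Cauchy--Schwarz separately in the two ranges, and discards all arithmetic information in the frequencies $nm$; this costs the factor $\log^2$. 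You instead keep the exact Parseval identity, reduce to the resonance condition $km=k'm'$, and control the off-diagonal by the elementary gcd sum $\sum_{m,m'<M}\gcd(m,m')^2/(mm')\ll M$ (your lcm-regrouping and the substitution $m=ga$, $m'=gb$ are both correct). This gives $\int_0^1 B^2\,dx\ll T/\sqrt y$ with \emph{no} logarithm, i.e.\ a strictly stronger theorem. Your exact Pythagorean identity $\int_0^1|\cR|^2dx=A^2+\int_0^1B^2dx$ is also a nice structural improvement: the paper only has the inequality $|\cR|^2\ll|H_T|^2+y^{3/2}T$ for the upper bound, and has to run a separate Cauchy--Schwarz argument on $\int_0^1\cR\,dx$ for the lower bound, whereas for you $\int_0^1|\cR|^2dx\geq A^2$ is immediate.

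The one place your write-up does not close is the lower bound $|E(N)|\gg\sqrt N$. You justify it by the endpoint constant $\zeta(-\tfrac12)\sqrt{2N}\neq 0$, but this only forces $|E(N)|\gg \sqrt N$ if the remaining (interior) error is $o(\sqrt N)$ or cannot cancel it --- and your own interior estimate is $\ll\int|f''|\ll\sqrt N$, the \emph{same} order as the boundary defect. Making the matched expansion rigorous (boundary layer of width $J\to\infty$, cancellation of the intermediate $\tfrac12\sqrt J$ terms against the Euler--Maclaurin boundary terms of the interior piece, etc.) is genuine work that your sketch skips. The paper sidesteps all of this with one observation you never invoke: $f(u)=\sqrt{N^2-u^2}$ is \emph{concave}, so every per-interval trapezoid-versus-arc error has the same sign; equivalently $P(N)$ is the area of a polygon inscribed in the disc, hence $E(N)<0$ and $|E(N)|$ is at least the area of the single circular segment over $[N-1,N]$, which is $\gg\sqrt N$. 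Adding this concavity remark closes your gap in one line (and makes $\zeta(-\tfrac12)$ unnecessary); without it, the lower-bound step as written is incomplete.
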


We note that any unimodular lattice can be rotated to $\Lambda_{z}$ for some $|z|\geq 1$, and since the counting function is invariant under rotation we get the following immediate consequence:
\begin{cor}
 For any compact set in the space of unimodular lattices  $K \subset \SL_2(\Z)\bk \SL_2(\R)$ 
$$\int_K |\cR_\Lambda(T)|^2d\mu(\Lambda)\ll_K T\log^2(T),$$
with $\mu$ the probability measure on the space of unimodular lattices coming from Haar measure of $\SL_2(\R)$.
\end{cor}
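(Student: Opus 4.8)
The plan is to deduce the corollary from Theorem \ref{t:main} by combining the rotation invariance of the counting function with the standard parametrization of the space of unimodular lattices. First I would record that $\cN_\Lambda(T)$, and hence $\cR_\Lambda(T)$, depends only on the set of norms of the lattice vectors, and is therefore invariant under the right action of $\mathrm{SO}(2)$. Consequently $\cR_\Lambda(T)$ descends to a function on the quotient $\SL_2(\Z)\bk \SL_2(\R)/\mathrm{SO}(2)$, which the assignment $z\mapsto \Lambda_z$ identifies with the quotient of the upper half-plane $\{z=x+iy:y>0\}$ by $\SL_2(\Z)$, precisely the lattices appearing in Theorem \ref{t:main} (this is the content of the remark preceding the corollary, that every unimodular lattice rotates to some $\Lambda_z$ with $|z|\ge 1$). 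Under this identification the probability measure $\mu$ pushes forward to the normalized hyperbolic area $\tfrac{3}{\pi}\tfrac{dx\,dy}{y^2}$.

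Next I would use compactness of $K$. Its continuous image $\bar K$ in the modular surface is compact, hence, inside a fundamental domain, contained in a horizontal slab $\{y_0\le y\le Y_0\}$ with $0<y_0\le Y_0$ depending on $K$ (the lower bound may be taken to be $y_0=\sqrt{3}/2$, and only the upper bound $Y_0$, coming from boundedness away from the cusp, depends on $K$). Since $\Lambda_{z+1}=\Lambda_z$, the integrand is $1$-periodic in $x$, so the $x$-integral over any fundamental-domain slice at height $y$ is dominated by $\int_0^1|\cR_{\Lambda_{x+iy}}(T)|^2\,dx$. This reduces the estimate to
$$\int_{\bar K}|\cR_{\Lambda_z}(T)|^2\,d\mu(z)\ll_K \int_{y_0}^{Y_0}\left(\int_0^1|\cR_{\Lambda_{x+iy}}(T)|^2\,dx\right)\frac{dy}{y^2}.$$

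Then I would simply insert the bound of Theorem \ref{t:main} and integrate over the bounded range $[y_0,Y_0]$. The first term contributes $T\,y^{-5/2}\log^2(T/\sqrt{y})$, and on $[y_0,Y_0]$ one has $y^{-5/2}\ll_K 1$ together with $\log(T/\sqrt{y})=\log T+O_K(1)\ll_K \log T$ for $T$ large, so this integrates to $\ll_K T\log^2(T)$. The second term contributes $T\,y^{-1/2}$, whose integral over $[y_0,Y_0]$ is a constant depending on $K$, giving $\ll_K T$. Adding the two yields the claimed bound $\ll_K T\log^2(T)$.

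The argument is essentially routine bookkeeping, and the only point requiring genuine care is the measure-theoretic reduction in the first two paragraphs: one must justify that the $\mathrm{SO}(2)$-averaged pushforward of Haar measure is indeed the normalized hyperbolic measure, and that replacing the fundamental-domain range of $x$ by the full unit interval $[0,1]$ is legitimate through periodicity (rather than illegitimately enlarging the compactly supported $K$). Once this reduction is secured, neither term of Theorem \ref{t:main} grows faster than $T\log^2(T)$ after integration against $y^{-2}\,dy$ over the bounded height range, so the bound follows immediately.
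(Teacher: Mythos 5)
Your proposal is correct and follows exactly the route the paper intends (and leaves implicit) in the remark preceding the corollary: rotation invariance of $\cR_\Lambda(T)$ reduces to lattices $\Lambda_z$ in the standard fundamental domain, compactness of $K$ confines $y$ to $[\sqrt{3}/2, Y_0]$, periodicity in $x$ lets you dominate by the full unit-interval integral, and then Theorem \ref{t:main} integrated against $y^{-2}\,dy$ over this bounded range gives $\ll_K T\log^2(T)$. Your write-up simply supplies the measure-theoretic bookkeeping the paper omits by calling the corollary an ``immediate consequence,'' and all of its steps check out.
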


\begin{rem}
It is interesting to compare this result with the classical result of Randol \cite{Randol1970}, regarding the mean square of the remainder when counting primitive vectors. 
In this case, Randol showed that 
$$\int_{\SL_2(\Z)\bk\SL_2(\R)} |\cN'_\Lambda(T)-\zeta(2)^{-1}\pi T^2|^2d\mu(\Lambda)=\zeta(2)^{-1}\pi T^2(1+o(1))$$
where $\cN'_\Lambda(T)$ denotes the number of primitive vectors in $\Lambda$ of norm less than $T$. We see that the remainder here is of average order $\sim T$, compared to a remainder of average order $O(\sqrt{T}\log(T))$ in our setting. We note that, even though the remainder is smaller (on average) when considering all lattice points, in this case the mean square over the full space of unimodular lattices diverges. This is due to the fact that lattices corresponding to points high in the cusp have very short vectors and hence the area of the circle is no longer a good approximation for the counting function.
\end{rem}

\begin{rem}
The lower bound in Theorem \ref{t:main} implies that
$\int_0^1|\cR_{\Lambda_{x+iy}}(T)|^2dx=\Omega(T)$ (meaning that $\overline{\lim}_{T\to\infty}\tfrac{\int_0^1|\cR_{\Lambda_{x+iy}}(T)|^2dx}{T}>0$).
We note that such $\Omega$-results were proved for more general area preserving deformations by Petridies and Toth \cite{PetridisToth2006} in two and three dimensions. 
\end{rem}
\subsection*{Acknowledgments}
We thank Alex Kontorovich, Zeev Rudnick,  and Andreas Strombergsson for their comments on an earlier versions of this work.
\section{Proofs}
We now turn to the proof of our main result. The first ingredient in the proof is the following explicit formula for the counting function.
For any real number $x$ let $[x]$ denote its integer part,  $\{x\}=x-[x]$ its fractional part and let $s(x)=\frac{1}{2}-\{x\}$ denote the odd sawtooth function. For $z=x+iy$  let
\begin{equation}\label{e:HT}
H_T(z)=2\sum_{0<m< \frac{T}{\sqrt{y}}}\left(s(y\sqrt{\tfrac{T^2}{y}-m^2}+mx)+s(y\sqrt{\tfrac{T^2}{y}-m^2}-mx)\right),
\end{equation} and let 
%\begin{equation}\label{e:HT}
%H_T(z)=2\sum_{0<m< \frac{T}{\sqrt{y}}}\left(1-\{\sqrt{yT^2-ym^2}+mx\}-\{\sqrt{yT^2-ym^2}-mx\}\right),
%%\frac{4}{\pi }\sum_{d< \frac{T}{\sqrt{y}}}\sum_{m=1}^\infty \frac{\sin\left(2\pi m \sqrt{yT^2-yd^2}\right)\cos(2\pi md x)}{m}.
%\end{equation} and let 
\begin{equation}\label{e:PT}
P(T)=2\sum_{|m|<T}\sqrt{T^2-m^2}.
\end{equation}
We then have
\begin{lem}\label{l:formula}
For $z=x+iy$
$$\cN_{\Lambda_{z}}(T)=yP(\tfrac T{\sqrt{y}})+H_T(z)+(1-2\{\sqrt{y}T\}).$$
\end{lem}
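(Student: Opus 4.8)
The plan is to compute $\cN_{\Lambda_z}(T)$ by slicing the lattice along its first coordinate. A point $\bigl(m\sqrt{y},\,(mx+n)/\sqrt{y}\bigr)$ has squared norm $m^2y+(mx+n)^2/y$, so it lies inside the circle of radius $T$ precisely when $(mx+n)^2<y(T^2-m^2y)$. This forces $|m|<T/\sqrt{y}$ and is then equivalent to
$$|mx+n|<R_m,\qquad R_m:=y\sqrt{\tfrac{T^2}{y}-m^2}.$$
Thus for each admissible $m$ the task reduces to counting integers $n$ in the open interval $(-R_m-mx,\,R_m-mx)$ of length $2R_m$, and the whole proof is the bookkeeping of assembling these counts.

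First I would record the elementary count of integers in an interval. For non-integer $u$ one has $[u]=u-\tfrac12+s(u)$, and for generic $T$ (so that no lattice point lies exactly on the circle) the number of integers strictly between $a$ and $b$ is $[b]-[a]$. Applying this with $a=-R_m-mx$ and $b=R_m-mx$, and using that $s$ is odd, the count for a fixed $m\neq 0$ becomes
$$[R_m-mx]-[-R_m-mx]=2R_m+s(R_m-mx)+s(R_m+mx).$$
Since $R_m$ depends only on $m^2$, this summand is invariant under $m\mapsto -m$, so summing over $0<|m|<T/\sqrt{y}$ gives twice the sum over positive $m$. The sawtooth part then reproduces exactly $H_T(z)$ as defined in \eqref{e:HT}, while the linear part contributes $4\sum_{0<m<T/\sqrt{y}}R_m$.

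Next I would treat the slice $m=0$ separately, since this is where the leftover constant term is generated. Here the points are $(0,n/\sqrt{y})$ with $|n|<\sqrt{y}T$; counting these integers and again writing $[\,\cdot\,]=(\,\cdot\,)-\tfrac12+s(\,\cdot\,)$ yields $2\sqrt{y}T+1-2\{\sqrt{y}T\}$. To finish, I would reconcile the linear terms with $yP(T/\sqrt{y})$: splitting off the $m=0$ term of $P$ in \eqref{e:PT} gives
$$yP(\tfrac{T}{\sqrt{y}})=2\sqrt{y}T+4\sum_{0<m<T/\sqrt{y}}R_m.$$
Adding the $m=0$ slice to the $m\neq 0$ slices, the two copies of $2\sqrt{y}T$ cancel and one is left with $yP(T/\sqrt{y})+H_T(z)+(1-2\{\sqrt{y}T\})$, as claimed.

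There is no deep obstacle here; the content is entirely careful accounting. The one point demanding attention is isolating the $m=0$ contribution so that its constant $2\sqrt{y}T$ matches precisely the $m=0$ term of $P$, and tracking the strict inequality in the definition of $\cN$. The latter affects only the measure-zero set of $T$ for which some endpoint $R_m\pm mx$ or $\sqrt{y}T$ is an integer; since Theorem \ref{t:main} concerns a mean square, this exceptional set is irrelevant and the identity may be used for generic $T$.
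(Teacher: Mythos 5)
Your proof is correct and takes essentially the same route as the paper: slicing the count over the first coordinate $m$, counting the integers $n$ in each interval via the integer-part/sawtooth identity, and regrouping the linear parts into $yP(T/\sqrt{y})$ and the sawtooth parts into $H_T(z)$. The only differences are cosmetic — you treat the $m=0$ slice separately while the paper handles all $m$ uniformly and extracts the constant term at the end, and you are somewhat more careful than the paper about the measure-zero set where an endpoint is an integer.
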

\begin{proof}
Writing a general element $v\in \Lambda_z$ as $$v=(m\sqrt{y},\tfrac{mx+n}{\sqrt{y}}),\quad m,n\in \Z,$$ we get the identity
\begin{eqnarray*}
\cN_{\Lambda_z}(T)%&=&\#\{(d,n\in \Z| y^2d^2+(dx+n)^2<yT^2\}\\
&=& \#\left\{(m,n\in \Z| (mx+n)^2<yT^2-y^2m^2\right\}\\
&=&\sum_{|m|<\frac{ T}{ \sqrt{y}}}\#\{n||mx+n|<\sqrt{yT^2-y^2m^2}\}\\
&=& \sum_{|m|<\frac{ T}{ \sqrt{y}}} \left([\sqrt{yT^2-y^2m^2}-mx]-[-\sqrt{yT^2-y^2m^2}-mx]\right)\\
&=&\sum_{|m|<\frac{ T}{ \sqrt{y}}}\left(2\sqrt{yT^2-y^2m^2}-\{\sqrt{yT^2-y^2m^2}-mx\}+\{-\sqrt{yT^2-y^2m^2}-mx\}\right)\\
&=& yP(\tfrac T{\sqrt{y}})+\sum_{|m|<\frac{T}{\sqrt{y}}}\left(1-\{y\sqrt{\tfrac{T^2}{y}-m^2}-mx\}-\{y\sqrt{\tfrac{T^2}{y}-m^2}+mx\}\right)\\
&=&yP(\tfrac T{\sqrt{y}}) +H_T(z)+(1-2\{\sqrt{y}T\}).
\end{eqnarray*}
\end{proof}
We think of the first term in the formula as approximating the main term and the second approximates the remainder.
To make this more precise we prove the following estimate

\begin{prop}\label{p:PT}
For all $T\geq 1$ we have
$P(T)=\pi T^2+O(\sqrt{T}).$
\end{prop}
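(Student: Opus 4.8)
The plan is to read $P(T)$ as a one-dimensional Riemann sum approximating the area of the disk, and to control the error by Euler--Maclaurin summation. Set $g(x)=\sqrt{T^2-x^2}$ on $[-T,T]$, so that $\pi T^2=2\int_{-T}^{T}g(x)\,dx$ and hence
$$P(T)-\pi T^2=2\Big(\sum_{|m|<T}g(m)-\int_{-T}^{T}g(x)\,dx\Big).$$
Since $g$ is even it is enough to treat the half-line, and two elementary facts about $g$ will drive the whole estimate: it is concave, with $g''(x)=-T^2/(T^2-x^2)^{3/2}$, and since $g'(0)=0$ its derivative satisfies $\int_0^x|g''(t)|\,dt=|g'(x)|=x/\sqrt{T^2-x^2}$.

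First I would apply the first-order Euler--Maclaurin formula on the interval $[0,M]$, where $M$ is the largest integer below $T$. This expresses $\sum_{0\le m\le M}g(m)$ as $\int_0^M g$, the boundary term $\tfrac12(g(0)+g(M))$, and the sawtooth integral $-\int_0^M s(x)g'(x)\,dx$. The only large boundary value is $g(0)=T$, and it cancels exactly against the isolated $m=0$ term in $\sum_{|m|<T}g(m)$; what survives from the boundary and from the tail is $g(M)-2\int_M^T g(x)\,dx$, which is $O(\sqrt T)$ because $T^2-M^2=(T-M)(T+M)<2T$ forces $g(M)<\sqrt{2T}$. The problem therefore reduces to showing that the sawtooth integral satisfies $\int_0^M s(x)g'(x)\,dx\ll\sqrt T$.

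This is the crux and the main obstacle, because one cannot bound it trivially: $\int_0^M|g'|=g(0)-g(M)\approx T$, so the total variation of $g$ is of order $T$ and the cancellation coming from the oscillation of $s$ is essential. The difficulty is concentrated at the endpoint $x=T$, where $g'$ has a square-root singularity, so I would isolate the last unit interval $[M-1,M]$ and bound its contribution directly by $\tfrac12\int_{M-1}^{M}|g'|\le\tfrac12 g(M-1)\le\sqrt T$, using that $T-(M-1)<2$. On the remaining interval $[0,M-1]$, where $g'$ is already of size $\ll\sqrt T$ (since $T^2-(M-1)^2\ge 2T-2$), I would integrate by parts against the bounded periodic antiderivative $S(x)=\int_0^x s(t)\,dt=\tfrac12\{x\}(1-\{x\})$, which satisfies $|S|\le\tfrac18$. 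This turns $\int_0^{M-1}s\,g'$ into a boundary term $S(M-1)g'(M-1)$ plus $-\int_0^{M-1}S\,g''$, and both are bounded by $\tfrac18|g'(M-1)|\ll\sqrt T$ thanks to the identity $\int_0^{M-1}|g''|=|g'(M-1)|$ recorded above.

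Collecting the estimates gives $P(T)-\pi T^2=O(\sqrt T)$. A less elementary alternative would apply Poisson summation directly to $g$, whose Fourier transform is a Bessel function; the decay $J_1(z)\ll z^{-1/2}$ then yields $\sum_{k\ne 0}\widehat g(k)\ll\sqrt T\sum_{k\ge1}k^{-3/2}\ll\sqrt T$. The Euler--Maclaurin route above is preferable here since it uses no special functions and makes transparent that the exponent $1/2$ comes precisely from the endpoint singularity of $g'$.
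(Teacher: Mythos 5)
Your proof is correct and follows essentially the same route as the paper's: both apply first-order Euler--Maclaurin (the paper derives it by hand, integrating $f_T'(x)s(x)$ over unit intervals), and both control the resulting sawtooth integral by a second integration by parts against the bounded antiderivative of $s$, exploiting the sign of $g''$ and stopping the summation a unit distance short of $T$ so that $|g'|\ll\sqrt{T}$ at the endpoint. Your isolation of the interval $[M-1,M]$ plays exactly the role of the paper's choice $M=[T]-1$.
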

\begin{proof}
Let $s(x)=\tfrac{1}{2}-\{x\}$ denote the odd sawtooth function, let $f_T(x)=\sqrt{T^2-x^2}$, and let 
$\cI_T(M)=\int_0^Mf_T'(x)s(x)dx$. For $M=[T]-1$ we estimate $\cI_T(M)$ in two different ways.

On one hand, using integration by parts
\begin{eqnarray*}
\cI_T(M)&=&\sum_{m=0}^{M-1}\int_n^{n+1}f_T'(x)s(x)dx
=\sum_{m=0}^{M-1}\int_0^1 f_T'(x+m)(\tfrac12-x)dx\\
&=& -\frac12\sum_{m=0}^{M-1}(f_T(m+1)+f_T(m))+\int_0^Mf_T(x)dx\\
&=& \frac{\pi T^2}{4}-\sum_{m=1}^M \sqrt{T^2-m^2}-\frac{T}{2}+\sqrt{T^2-M^2}-\int_M^T\sqrt{T^2-x^2}dx.
\end{eqnarray*}
Noting that $\sum_{m=1}^M \sqrt{T^2-m^2}+\frac{T}{2}=\frac{1}{2}\sum_{|m|\leq M} \sqrt{T^2-m^2}$ we get that
$$\frac{\pi T^2}{4}-\frac{1}{2}\sum_{|m|\leq M} \sqrt{T^2-m^2}=\cI_T(M)+\int_M^T\sqrt{T^2-x^2}dx-\sqrt{T^2-M^2}.$$

On the other hand, integrating by parts the other way we get
\begin{eqnarray*}
\cI_T(M)&=&\sum_{m=0}^{M-1}\int_0^1 f_T'(x+m)(\tfrac12-x)dx\\
&=& \sum_{m=0}^{M-1}\int_0^1(-f_T''(x+m)
\frac{x-x^2}{2}dx
\end{eqnarray*}
Since $-f_T''(x+m)$ is positive and $0<\frac{x-x^2}{2}\leq \frac{1}{8}$ we can bound
\begin{eqnarray*}
0\leq \cI_T(M)&\leq &-\frac{1}{8} \sum_{m=0}^{M-1}\int_0^1f_T''(x+m)
dx\\
&=&-\frac{1}{8}\int_0^M f_T''(x)dx%=\frac{f'(0)-f'(M)}{8}
=\frac{M}{8\sqrt{T^2-M^2}}.
\end{eqnarray*}
Combining the two estimates we see that
$$|\pi T^2-2\sum_{|m|\leq M} \sqrt{T^2-m^2}|\leq \frac{M}{2\sqrt{T^2-M^2}}+4\sqrt{T^2-M^2}.$$

For $ M=[T]-1$ we can bound 
$$\sqrt{2T-1}\leq \sqrt{T^2-M^2}\leq 2\sqrt{T-1},$$ 
and 
$$2\sum_{|m|\leq M} \sqrt{T^2-m^2}=P(T)+O(\sqrt{T}),$$
so that indeed
$P(T)=\pi T^2+O(\sqrt{T})$ as claimed.

%Finally, using \eqref{e:polygon} with $\tilde T=T/\sqrt{y}$ and multiplying by $y$  we see that 
%$$2\sum_{|m|\leq T/\sqrt{y}} \sqrt{ yT^2-y^2m^2}=\pi T^2+O(y^{3/4}\sqrt{ T}),$$
%so 
%\begin{eqnarray*}
%\cN_{\Lambda_z}(T)&=&\pi T^2+H_T(z)+O(y^{3/4}\sqrt{ T}))\\
%\end{eqnarray*}
%as clamed. 
\end{proof}

This estimate shows that $yP(\frac{T}{\sqrt{y}})=\pi T^2+O(y^{3/4}\sqrt{T})$ and hence $H_T(z)$ is a good approximation for the remainder, in the following sense:
\begin{cor}\label{c:HR}
$\cR_{\Lambda_z}(T)=H_T(z)+O(y^{3/4}\sqrt{T})$.
\end{cor}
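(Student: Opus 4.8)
The plan is to read the corollary off directly from Lemma~\ref{l:formula} and Proposition~\ref{p:PT}, since the term $H_T(z)$ already appears verbatim in the exact formula for $\cN_{\Lambda_z}(T)$; no new idea is needed, only a rescaling of the known error term. First I would start from
\[
\cN_{\Lambda_z}(T)=yP(\tfrac{T}{\sqrt{y}})+H_T(z)+(1-2\{\sqrt{y}T\})
\]
and subtract $\pi T^2$ to obtain
\[
\cR_{\Lambda_z}(T)=\big(yP(\tfrac{T}{\sqrt{y}})-\pi T^2\big)+H_T(z)+(1-2\{\sqrt{y}T\}).
\]
The whole task then reduces to controlling the first parenthesis and the last, trivially bounded, term.

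For the first term I would apply Proposition~\ref{p:PT} at the rescaled radius $T/\sqrt{y}$ (which is $\geq 1$ in the relevant range $T\geq\sqrt{y}$), giving $P(\tfrac{T}{\sqrt{y}})=\pi\tfrac{T^2}{y}+O(\sqrt{T/\sqrt{y}})=\pi\tfrac{T^2}{y}+O(T^{1/2}y^{-1/4})$. Multiplying through by $y$ turns the main term into exactly $\pi T^2$ and scales the error to $O(y\cdot T^{1/2}y^{-1/4})=O(y^{3/4}\sqrt{T})$, which is precisely the claimed error. For the last term, the sawtooth remainder $1-2\{\sqrt{y}T\}$ lies in $(-1,1]$ and is therefore $O(1)$; this is absorbed into $O(y^{3/4}\sqrt{T})$ as soon as $y^{3/4}\sqrt{T}\gg 1$, which holds in the regime of interest (for instance when $y$ is bounded below, as in the reduction to $|z|\geq 1$, and $T\geq 1$).

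I do not expect a genuine obstacle here: the corollary is a bookkeeping consequence of the two preceding results. The only points requiring a little care are (i) correctly propagating the square-root error through the dilation $T\mapsto T/\sqrt{y}$ and the subsequent multiplication by $y$, so that the exponent comes out as $y^{3/4}$ rather than $y^{1/2}$ or $y$; and (ii) verifying that the bounded sawtooth term is dominated by the stated error in whatever range of $y$ the corollary is later used. Both become immediate once the scaling is written out explicitly.
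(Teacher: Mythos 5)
Your proof is correct and is exactly the paper's argument: the paper derives the corollary by the same rescaling of Proposition~\ref{p:PT}, namely $yP(\tfrac{T}{\sqrt{y}})=\pi T^2+O(y^{3/4}\sqrt{T})$, applied to the formula of Lemma~\ref{l:formula}, with the bounded sawtooth term absorbed into the error. Your explicit attention to the range $T\geq\sqrt{y}$ and to absorbing the $O(1)$ term is if anything slightly more careful than the paper, which states the corollary without further comment.
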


The second ingredient is the following elementary mean square estimate for the oscillatory term $H_T(z)$.
\begin{prop}\label{p:meansquare}
For any $y>0$ and $T>1$ we have 
$$\int_0^1 |H_T(x+iy)|^2dx\ll \max(1,\tfrac{T}{\sqrt{y}}\log^2(\tfrac{T}{\sqrt{y}})).$$
\end{prop}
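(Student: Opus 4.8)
The plan is to expand the odd sawtooth in its Fourier series $s(u)=\frac1\pi\sum_{k\ge1}\frac{\sin(2\pi k u)}{k}$ and then exploit orthogonality on $[0,1]$. Abbreviating $N=\tfrac{T}{\sqrt y}$ and $\alpha_m=y\sqrt{\tfrac{T^2}{y}-m^2}$, the product-to-sum identity $\sin(P+Q)+\sin(P-Q)=2\sin P\cos Q$ collapses each pair of summands in \eqref{e:HT} into a single cosine in the variable $x$, so that
$$H_T(x+iy)=\frac4\pi\sum_{0<m<N}\sum_{k\ge1}\frac{\sin(2\pi k\alpha_m)}{k}\cos(2\pi k m x).$$
For each of the finitely many $m<N$ the inner series over $k$ is just the Fourier series of a genuine $L^2[0,1]$ function, so I may apply Parseval term by term without any convergence subtlety.

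First I would integrate. Using $\int_0^1\cos(2\pi\ell x)\cos(2\pi\ell' x)\,dx=\tfrac12\delta_{\ell,\ell'}$ for $\ell,\ell'\ge1$, only the pairs $(m,k),(m',k')$ with $km=k'm'$ survive, giving
$$\int_0^1|H_T(x+iy)|^2\,dx=\frac{8}{\pi^2}\sum_{\substack{m,m'<N,\ k,k'\ge1\\ km=k'm'}}\frac{\sin(2\pi k\alpha_m)\sin(2\pi k'\alpha_{m'})}{kk'}.$$
At this point I would simply discard the oscillation, bounding $|\sin(\cdot)\sin(\cdot)|\le1$; none of the arithmetic content of the phases $\alpha_m$ is needed for the upper bound, which is what makes the argument elementary.

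The remaining step carries the real content: counting the solutions of $km=k'm'$ and estimating the resulting multiple harmonic sum. Setting $g=\gcd(m,m')$, $m=ga$, $m'=gb$ with $\gcd(a,b)=1$, the relation $km=k'm'$ forces $k=bt$ and $k'=at$ for an integer $t\ge1$, which is a bijective parametrization of the solution set. Then $\tfrac1{kk'}=\tfrac1{abt^2}$, the constraints $m,m'<N$ become $a,b<N/g$, and summing $\sum_{t\ge1}t^{-2}=\tfrac{\pi^2}{6}$ first reduces the bound to
$$\sum_{1\le g<N}\Bigl(\sum_{a<N/g}\tfrac1a\Bigr)^2\le\sum_{1\le g<N}\bigl(1+\log(N/g)\bigr)^2\ll N\log^2 N,$$
where in the last step I bound each logarithm crudely by $\log N$ and count at most $N$ values of $g$. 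Finally, when $N\le1$ the sum defining $H_T$ is empty and the integral vanishes; combined with the previous display this yields the claimed bound $\ll\max(1,\tfrac{T}{\sqrt y}\log^2\tfrac{T}{\sqrt y})$.

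There is no serious analytic obstacle here, so I expect the proof to be short; the two points requiring care are the term-by-term application of Parseval (legitimate because the outer sum over $m$ is finite) and verifying the bijectivity of the $(g,a,b,t)$ parametrization of $\{km=k'm'\}$. I would also note in passing that the sharper estimate $\sum_{g<N}\log^2(N/g)\ll N$ (via $\int_0^\infty s^2e^{-s}\,ds<\infty$) in fact eliminates the logarithmic factor entirely, so the $\log^2$ appearing in the statement is an artifact of the convenient crude bound rather than of the true size of the mean square.
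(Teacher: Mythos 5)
Your proof is correct, but after the opening move --- the Fourier expansion of the sawtooth and the product-to-sum collapse, which is exactly how the paper's proof also begins --- it takes a genuinely different route. The paper never opens the full square: it splits the $n$-sum at a parameter $A$, applies Cauchy--Schwarz in two different ways (over $n$ in the range $n<A$ after swapping the order of summation, over $m$ in the tail $n\geq A$), and then uses orthogonality only within fixed-$n$ or fixed-$m$ slices, where the frequencies $mn$ are automatically distinct; optimizing $A=\max(2,\tfrac{T}{\sqrt y})$ gives the stated $\tfrac{T}{\sqrt y}\log^2(\tfrac{T}{\sqrt y})$. You instead apply Parseval to the whole double sum, which forces you to confront the coincidence set $km=k'm'$; your gcd parametrization $(m,m',k,k')=(ga,gb,bt,at)$ handles it exactly, and harmonic-sum estimates finish the job. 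The comparison is instructive: the paper's Cauchy--Schwarz step is precisely what avoids any divisor counting, but it is also precisely where the $\log^2$ enters, whereas your more arithmetic route pays off --- as you observe in your closing remark, $\sum_{g<N}\bigl(1+\log(N/g)\bigr)^2\ll N$, so your method in fact proves the sharper bound $\int_0^1|H_T|^2dx\ll\max(1,\tfrac{T}{\sqrt y})$ with no logarithmic factor, strictly stronger than Proposition \ref{p:meansquare}, and it would correspondingly sharpen the upper bound in Theorem \ref{t:main} to $\tfrac{T}{\sqrt y}+y^{3/2}T$, matching the lower bound. One cosmetic slip: the display $\sum_{g<N}\bigl(1+\log(N/g)\bigr)^2\ll N\log^2 N$ is false as written when $N$ is close to $1$ (the left side is at least $1$ while the right side tends to $0$); what your described computation actually yields is $N(1+\log N)^2$, which is indeed $\ll\max(1,N\log^2 N)$, so the conclusion stands.
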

\begin{proof}
Expanding the sawtooth function $s(x)=1/2-\{x\}$ into its Fourier series
$$s(x)=\sum_{n=1}^\infty\frac{\sin(2\pi n x)}{\pi n},$$
 we get that 
$$H_T(z)=\frac{4}{\pi }\sum_{m< \frac{T}{\sqrt{y}}}\sum_{n=1}^\infty \frac{\sin\left(2\pi n y\sqrt{\tfrac{T^2}{y}-m^2}\right)\cos(2\pi mn x)}{n}.$$
Fixing a large parameter $A\geq 1$ (to be determined later) we separate the sum over $n$ into two ranges 
\begin{eqnarray*}H_T(z)&=&\sum_{m< \frac{T}{\sqrt{y}}}\sum_{n<A} \frac{\sin\left(2\pi n y\sqrt{\tfrac{T^2}{y}-m^2}\right)\cos(2\pi mn x)}{n}\\
&&+\sum_{m< \frac{T}{\sqrt{y}}}\sum_{n=A}^\infty \frac{\sin\left(2\pi n y\sqrt{\tfrac{T^2}{y}-m^2}\right)\cos(2\pi mn x)}{n}\\
&&=\cJ_1+\cJ_2,
\end{eqnarray*}
say, so that 
\begin{eqnarray*}
|H_T(z)|^2&\leq& 2(|\cJ_1|^2+|\cJ_2|^2)
\end{eqnarray*}
We can bound each of the terms as follows: For the first term, exchanging the order of summation and using Cauchy-Schwarz on the $n$ sum we get 
\begin{eqnarray*}
%\lefteqn{|\sum_{m<A} \frac{1}{m}\sum_{d< \frac{T}{\sqrt{y}}}\sin\left(2\pi m y\sqrt{\tfrac{T^2}{y}-d^2}\right)\cos(2\pi md x)|^2\leq }\\
|\cJ_1|^2\leq \log(A)\sum_{n\leq A}\frac{1}{n}\bigg|\sum_{m< \frac{T}{\sqrt{y}}}\sin\left(2\pi n y\sqrt{\tfrac{T^2}{y}-m^2}\right)\cos(2\pi mn x)\bigg|^2,
\end{eqnarray*}
and for the second term,  we don't exchange orders and use Cauchy-Schwarz in the $m$ sum to get
\begin{eqnarray*}
|\cJ_2|^2\leq \frac{T}{\sqrt{y}}\sum_{m< \frac{T}{\sqrt{y}}}\bigg|\sum_{n=A}^\infty \frac{1}{n}\sin\left(2\pi n y\sqrt{\tfrac{T^2}{y}-m^2}\right)\cos(2\pi mn x)\bigg|^2
\end{eqnarray*}
Note that, in both cases, the dependence on $x$ is only in the inner most sum giving the bound 
\begin{eqnarray*}
\int_{\R/\Z}|H_T(z)|^2dx&\ll& \log(A)\sum_{n<A}\frac{1}{n}\int_{0}^{1}\bigg|\sum_{m< \frac{T}{\sqrt{y}}}\sin\left(2\pi n y\sqrt{\tfrac{T^2}{y}-m^2}\right)\cos(2\pi mn x)\bigg|^2dx\\
\nonumber &&+ \frac{T}{\sqrt{y}}\bigg|\sum_{m< \frac{T}{\sqrt{y}}}\int_{0}^{1}|\sum_{n=A}^\infty \frac{1}{n}\sin\left(2\pi n y\sqrt{\tfrac{T^2}{y}-m^2}\right)\cos(2\pi mn x)\bigg|^2dx
\end{eqnarray*}
Next, using orthogonality, we can evaluate the inner integrals  by 
\begin{eqnarray*}
\int_{0}^1\bigg|\sum_{m< \frac{T}{\sqrt{y}}}\sin(2\pi n y\sqrt{\tfrac{T^2}{y}-m^2})\cos(2\pi mn x)\bigg|^2dx=
\frac{1}{2}\sum_{m< \frac{T}{\sqrt{y}}} \sin^2(2\pi n y\sqrt{\tfrac{T^2}{y}-m^2}) \ll \frac{T}{\sqrt{y}},
\end{eqnarray*}
and
\begin{eqnarray*}
\int_{0}^1\bigg|\sum_{n=A}^\infty \frac{\sin(2\pi n y\sqrt{\tfrac{T^2}{y}-m^2})\cos(2\pi mn x)}{n}\bigg|^2dx=
\frac{1}{2}\sum_{n=A}^\infty  \frac{\sin^2(2\pi n y\sqrt{\tfrac{T^2}{y}-m^2})}{n^2}\ll \frac{1}{A}.
\end{eqnarray*}
Plugging these back we get that
\begin{eqnarray*}
\int_{\R/\Z}|H_T(x+iy)|^2dx&\ll& \log^2(A)\frac{T}{\sqrt{y}}+ \frac{T^2}{Ay}\end{eqnarray*}
and taking $A=\max(2,\tfrac{T}{\sqrt{y}})$ concludes the proof.
\end{proof}

\begin{proof}[Proof of Theorem \ref{t:main}]
From Corollary \ref{c:HR} we get that for $z=x+iy$
%$$\cR_{\Lambda_z}(T)=H_T(z)+O(y^{3/4}\sqrt{T}),$$
%so
$$|\cR_{\Lambda_z}(T)|^2\ll |H_T(z)|^2+O(y^{3/2}T),$$
and after integrating,  the mean square estimate \eqref{e:meansquare} follows from Proposition \ref{p:meansquare}.

For the lower bound, note that $H_T(z)$ has mean zero so
$$\int_0^1 \cN_{\Lambda_z}(T)  dx=yP(T/\sqrt{y})+O(1).$$
When $\tilde{T}=T/\sqrt{y}\in \N$ is an integer, $P(\tilde T)$ is the area of the polygon with vertices at the points $(\pm m,\pm \sqrt{\tilde T^2-m})$ with $m=0,1,\ldots , \tilde T-1$. Consequently, in this case it approximates the area of the circle from below and $\pi \tilde T^2-P(\tilde T)\geq \sqrt{2\tilde T-1}$. 
Hence, we have on one hand 
$$|\int_0^1 \cR_{\Lambda_z}(T)  dx|\gg y^{3/4}\sqrt{T},$$
and on the other hand, 
$$|\int_0^1 \cR_{\Lambda_z}(T)  dx|\leq \left(\int_0^1 |\cR_{\Lambda_z}(T)|^2  dx\right)^{1/2},$$
so that indeed,
 $$\int_0^1 |\cR_{\Lambda_z}(T)|^2  dx\gg y^{3/2}\sqrt{T}.$$
\end{proof}

% ----------------------------------------------------------------
%\bibliographystyle{alpha}

%\bibliography{AKbibliog}
% ----------------------------------------------------------------
%%GATHER{C:/localtexmf/Bib/Mybib.bib}   % For Gather Purpose Only
%\bibliographystyle{amsplain}
%\bibliography{C:/Bib/Mybib}

\end{document}